\newtheorem{thm}{Theorem}
\newtheorem{cor}{Corollary}
\newtheorem{lem}{Lemma}
\newtheorem{exam}{Example}
\newtheorem*{Athm}{Theorem A}
\newcommand\Cq{\mathcal{C}}
\newcommand\z{\mathfrak{z}}
\newcommand{\norm}[1]{\left\Vert#1\right\Vert}
\newcommand{\R}{\mathbb R}
\newcommand{\eps}{\varepsilon}
\begin{document}

\title[Approximation of Linear Differential Equations with delay]{Approximation of Linear Differential Equations with variable delay on an Infinite Interval}%Transference of Uniform Stability of Linear Delayed Differential Equations to the Corresponding Difference Equations

\author[]{Daniel Sep\'ulveda}
\address{Escuela de Matem\'atica y Estad\'istica, Universidad Central de Chile}
\email{daniel.sep.oe@gmail.com}
\thanks{Thanks for the support of research project CIR 1418 at Universidad Central de Chile}
\subjclass[2010]{34K06, 34K07, 34K20, 34K28}

\maketitle

%%%%%%%%%%%%%%%%
%COMIENZO RESUMEN%%%
%%%%%%%%%%%%%%%%
\begin{abstract}
We study approximation of non-autonomous linear differential equations with variable delay over infinite intervals. 
We use piecewise constant argument to obtain a corresponding discrete difference equation. 
The study of numerical approximation over an unbounded interval is in correspondence with the problem 
of transference of qualitative properties between a continuous and the corresponding discrete dynamical systems. 
We use theory of differential equations with delay and theory of integral inequality to prove our main result.
We state sufficient conditions for: a) uniform approximation of solutions over an unbounded interval and b) transference of uniform asymptotic stability of non-autonomous linear differential equations with variable delay to the corresponding discrete difference equations. We improve and extended the results of \cite{cooke1994}.\\

\medskip
Keywords: Linear functional-differential equations, Theoretical approximation of solutions, Stability theory, Numerical approximation of solutions.\\

\subjclass[MSC2010]{ 34K06, 34K07, 34K20, 34K28}
\medskip

%Highlights:\\

%We study approximation of non-autonomous linear differential equations with variable delay over infinite %intervals. 
%We use piecewise constant argument to obtain a corresponding discrete difference equation.
%We state sufficient conditions for: a) uniform approximation of solutions over an unbounded interval and b) transference of uniform asymptotic stability of non-autonomous linear differential equations with variable delay to the corresponding discrete difference equations.

\end{abstract}

\section{Introduction}

We study approximation of non-autonomous functional differential equations over infinite intervals. 
Recently, several researchers have discretized systems of differential equations 
using piecewise constant argument, they obtained an Euler's approximation of the 
solution of original system, see \cite{mohamad2003,huang2006,abbas2013}.
These paper follow the ideas developed by \cite{gyori1991}, who was interested in the 
convergence of several approximation, by piecewise constant argument,
to the actual solution of a class of delay linear differential equations.
Actually, that work belongs to a series of papers about approximations of solutions
of differential equations with delay \citep[see][]{gyori1988,gyori1991,cooke1994,gyori1995,gyori2002,gyori2008}.
Numerical methods for differential equations with delay are well-known, see \cite{bellen2013}. 
Numerical analysis for the study of  stability of delay differential equations have been developed recently, see \cite{breda2015}. However, the study of numerical approximation of a solution  over an unbounded interval is in correspondence with the problem of transference of qualitative properties between a continuous dynamical system  and the  corresponding  discrete dynamical system.   %\cite{gyori1991} started the approximation of solution of differential equations with delay using piecewise constant argument. Since then several authors have been used or considered piecewise constant to obtain a discrete version of differential equations with delays.

In this work we study the approximation of the homogeneous non-autonomous linear differential equations with variable delay 
\begin{equation}\label{eq: devd}
x'(t)=-a(t)x(t-r(t)),
\end{equation}
where $a:[0,\infty)\to [0,\infty)$ and $r:[0,\infty)\to [0,q],$  with initial condition 
\begin{equation}\label{eq: icd}x(t)=\varphi(t),\quad -q\leq t\leq 0,\quad \varphi\in \Cq \equiv C([-q , 0], \R),\end{equation}
where  $q:=\sup_{t\in\R^+_0}\{r(t)\}$ is a positive real number.

Our aims are find conditions for:
\begin{enumerate}
\item  the uniform convergence of approximate solutions over 
$[0,\infty)$; and
\item  the transference of uniform stability  between solutions 
of linear differential equations with variable delay 
and its corresponding difference equation.
\end{enumerate}

The study of transference of uniform asymptotic stability between solutions of linear differential equations with delay and the corresponding discrete difference equation started with  \cite{cooke1994}.
\cite{cooke2000} studied the dynamic of the solutions of a singular difference equation with delay which can be interpreted as  an Euler discretizations of a singular differential equations with delay. They concluded that
numerical approximation of solutions of singularly perturbed delay differential equations maybe showing dynamics
which are irrelevant to the actual dynamics in these equations.
These type of difficulties, namely spurious fixed point, are  well-known for Runge-Kutta methods of numerical approximation for ordinary differential equations.
 %\cite{mohamad2000} proved Halanay-type inequalities for nonautonomous scalar systems with discrete and distributed
%delays. They use piecewise constant argument to obtain a discrete non-autonomous difference systems and then show  
%discrete time inequalities, which are analoguesof continuous time inequalities. Therefore, indirectly, they proved the transference of  differential inequality with delay to the corresponding difference equation.  
 
\cite{liz2002} proved a discrete version of Halanay's inequality and used it to obtain the transference of asymptotic stability between the solutions of
$$x'(t)=-ax(t)+b(t)f(t,x_t), \quad x_0=\phi,$$
and 
$$\frac{x_{n+1}-x_n}{h}=-ax_n+b(t_n)f(t_n,\phi_n).$$

\cite{gyori2002}  studied the transference of uniform asymptotic
stability between solutions of linear neutral differential equations with constant delay and the 
corresponding discrete difference equation.%, this way they extended the result of \cite{cooke1994}.

We use theory of differential equations with delay and a integral version of Halanay inequality to prove our main result. To our knowledge our result is the first, of this type, for non-autonomous linear differential equations with variable delay, we use Halanay's inequality to obtain our main theorem, in this way we complement, extend  and improve the results of \cite{cooke1994}.

The rest of the paper is organized as follows. In the next section, Halanay Inequality, some definitions
and preliminary results are presented. Section 3 is devoted to discretization using piecewise constant argument and approximation over compact interval. Section 4 we prove main results about transference of stability properties and 
approximation over non-compact interval. Finally, in Section 5 %\textbf{we show an illustrative example and} 
we examine some implications of our results.

%We assume that the zero solution of \eqref{eq: devd} is uniformly asymptotically stable, then we will prove that
%the zero solution of the corresponding discrete equation are uniformly asymptotically stable too.

%However, the behaviours of solutions of functional differential equations is different 
%to those of ordinary differential equations \cite{driver1977,gopalsamy2013}. For 
%example, if there exists $\alpha>0$ such that $a(t)\leq \alpha$ and $\alpha q\leq 3/2$, 
%then the zero solution of \eqref{eq: devd} is uniformly stable, but if $\alpha q >3/2$ 
%then there are equations with unbounded solutions 
%(see \cite{gopalsamy2013,yoneyama1987}).   
 
\section{Preliminaries and Halanay inequality}
\label{sec: HIP}

In this section we introduce usual notation and some definitions of theory of differential equations with delay. Suppose $q\geq 0$ is a given real number, $\R=(-\infty,\infty)$,  
%is an $n$-dimensional linear vector space over the real with norm $\norm{\cdot}$,
$C([a,b],\R)$ is the Banach space of continuous functions mapping the interval $[a,b]$ into $\R$ with the topology of uniform convergence. If $[a,b]=[-q,0]$ we let $\Cq=C([-q,0],\R )$ and designate the norm of an element $\varphi$ in $\Cq$ by $\norm{\varphi}=\sup_{-r\leq \theta\leq 0}|\varphi(\theta)|$.
If 
$$\sigma\in\R, A\geq 0,\mbox{ and }x\in C([-\sigma-q,\sigma+A],\R),$$
then for any $t\in[\sigma,\sigma+A],$ we let $x_t(\theta)=x(t+\theta), -q\leq \theta \leq 0.$ If $D$ is a subset of $\R\times\Cq, f:D\to\R$ is
 a given function and ``$\;'\;$'' represents the right-hand derivative, we say that the relation
\begin{equation}\label{eq: bt1.1}
x'(t)=f(t,x_t)
\end{equation}
is a \textit{differential equation with delay} or a \textit{retarded functional differential equation} on $D$. A function 
$x$ is said to be a \textit{solution} of Equation \eqref{eq: bt1.1} on $[\sigma-q,\sigma+A)$ if there are $\sigma \in\R$ and 
$A>0$ such that $x\in C([\sigma-q,\sigma+A),\R), (t,x_t)\in D$ and $x(t)$ satisfies Equation \eqref{eq: bt1.1} for 
$t\in[\sigma,\sigma+A)$. For given $\sigma\in\R$, $\phi\in \Cq$, we say $x(\sigma,\phi,f)$ is a solution of equation \eqref{eq: bt1.1} \textit{with initial value} $\phi$ at $\sigma$ if there is an $A>0$ such that $x(\sigma,\phi,f)$ is a solution
of equation \eqref{eq: bt1.1} on $[\sigma-q,\sigma +A]$ and $x_{\sigma}(\sigma,\phi,f)=\phi$.
The theory of differential equations with delay can be found in books \citep{driver1977} and \citep{hale1993}.

\subsection{Halanay Inequality}
\cite{halanay1966} proved an asymptotic formula for the solutions of a
differential inequality with delay, and applied it in the stability theory of linear systems with delay.
Since beginning of the twenty-first century several authors have been extended, improved  and called Halanay inequality to such inequality 
\citep[see][]{baker2000,mohamad2000,liz2000,pinto2000,liz2002,liz2005,ou2015}.

\label{subsec: AppeHal}
\begin{thm}\label{Halanay}
Let $t_0$ be a real number and $q$ be a non-negative number. If $v:[t_0-q,\infty)\to\R^+$ satisfies
\begin{equation}\label{Hal1} \frac{d}{dt}v(t)\leq -\alpha v(t)+\beta\left[\sup_{s\in[t-q,t]}v(s)\right];\quad t\geq t_0 
\end{equation} 
where $\alpha$ and $\beta$ are constants with $\alpha>\beta>0$, then 
\begin{equation}\label{Hal2} v(t)\leq \norm{v_{t_0}}_q e^{-\eta(t-t_0)}\mbox{ for } t\geq t_0, 
\end{equation} 
where $\eta$ is the unique positive solution of
\begin{eqnarray}\label{*}
\eta=\alpha-\beta e^{\eta q}.
\end{eqnarray}
\end{thm}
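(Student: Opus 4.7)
The plan is to combine existence of $\eta$ with a comparison argument against an exponentially decaying majorant, driven by a minimal crossing-time contradiction. First, I would establish the root $\eta$: set $F(\eta) := \eta - \alpha + \beta e^{\eta q}$ and observe $F(0) = \beta - \alpha < 0$, $F'(\eta) = 1 + \beta q e^{\eta q} > 0$, and $F(\eta) \to +\infty$. Strict monotonicity and the intermediate value theorem then yield the unique positive solution to \eqref{*}.

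Fix $\varepsilon > 0$, write $M := \norm{v_{t_0}}_q$, and define the majorant $u_\varepsilon(t) := (M+\varepsilon) e^{-\eta(t-t_0)}$. The claim is that $v(t) < u_\varepsilon(t)$ for every $t \geq t_0$; letting $\varepsilon \downarrow 0$ then yields \eqref{Hal2}. Suppose the claim fails. Since $v(t_0) \leq M < M + \varepsilon = u_\varepsilon(t_0)$, continuity produces a first crossing time $t_1 := \inf\{t > t_0 : v(t) \geq u_\varepsilon(t)\}$ with $v(t_1) = u_\varepsilon(t_1) > 0$ and $v < u_\varepsilon$ on $[t_0, t_1)$. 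Because $v - u_\varepsilon$ rises to zero at $t_1$, the right derivative satisfies
$$v'(t_1) \geq u_\varepsilon'(t_1) = -\eta u_\varepsilon(t_1) = -\eta v(t_1).$$

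To obtain the opposite \emph{strict} inequality I would estimate the delayed supremum at $t_1$. On $[t_0 - q, t_0] \cap [t_1 - q, t_1]$ one has $v(s) \leq M$, while on $[t_0, t_1] \cap [t_1 - q, t_1]$ one has $v(s) \leq u_\varepsilon(s)$. Since $u_\varepsilon$ is decreasing and $s \geq t_1 - q$, both bounds are at most $(M+\varepsilon)e^{-\eta(t_1 - q - t_0)} = e^{\eta q} v(t_1)$. At a maximizing $s^*$ the bound is in fact strict: on the initial segment because $M < M + \varepsilon \leq e^{\eta q} v(t_1)$ (using $t_1 - t_0 \leq q$ in that case); on $[t_0, t_1)$ because $v(s) < u_\varepsilon(s)$; and at $s = t_1$ because $v(t_1) < e^{\eta q} v(t_1)$. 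Continuity of $v$ guarantees the sup is attained, so $\sup_{s \in [t_1-q, t_1]} v(s) < e^{\eta q} v(t_1)$. Substituting this into \eqref{Hal1} at $t_1$ and invoking $\alpha - \beta e^{\eta q} = \eta$ from \eqref{*} yields $v'(t_1) < -\eta v(t_1)$, contradicting the display above.

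The main obstacle is precisely the strictness of the delayed-supremum estimate: without the $\varepsilon$-buffer one merely obtains $v'(t_1) \leq -\eta v(t_1)$, which is compatible with the reverse inequality and yields no contradiction. Introducing $\varepsilon > 0$ forces the supremum strictly below $e^{\eta q} v(t_1)$, and passing to the limit $\varepsilon \to 0^+$ at the end recovers the sharp bound \eqref{Hal2}.
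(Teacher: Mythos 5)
Your proposal is correct, but note that the paper itself offers no proof of this theorem: it is quoted as a known result, with the reader referred to \cite{driver1977} for a statement. Your argument is the classical first-crossing comparison proof of Halanay's inequality --- monotonicity of $F(\eta)=\eta-\alpha+\beta e^{\eta q}$ for existence and uniqueness of the root of \eqref{*}, an $\varepsilon$-inflated exponential majorant $u_\varepsilon$, a minimal crossing time $t_1$, the estimate $\sup_{s\in[t_1-q,t_1]}v(s)<e^{\eta q}v(t_1)$ obtained by splitting $[t_1-q,t_1]$ at $t_0$, and the derivative contradiction $-\eta v(t_1)\le v'(t_1)<-\eta v(t_1)$ --- and the estimates check out, including the strictness bookkeeping you correctly identify as the crux (the $\varepsilon$-buffer makes the sub-$t_0$ segment strict, and $v(t_1)=u_\varepsilon(t_1)>0$ even when $\norm{v_{t_0}}_q=0$). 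Two small caveats. First, the strictness at a maximizer $s^*=t_1$ uses $e^{\eta q}>1$, which fails when $q=0$; since the theorem permits $q$ non-negative, dispose of $q=0$ separately, where \eqref{Hal1} reads $v'(t)\le-(\alpha-\beta)v(t)$ and Gronwall gives \eqref{Hal2} with $\eta=\alpha-\beta$ directly. Second, the step $v'(t_1)\ge u_\varepsilon'(t_1)$ infers a derivative inequality at $t_1$ from the behavior of $v-u_\varepsilon$ on $[t_0,t_1)$, i.e.\ from left difference quotients; this is legitimate under the two-sided reading of $\frac{d}{dt}v$ in \eqref{Hal1}, but if only right-hand derivatives are assumed (the convention the paper adopts elsewhere for solutions), you should instead set $t_1=\inf\{t>t_0:\,v(t)>u_\varepsilon(t)\}$ and use points $t_n\downarrow t_1$ with $v(t_n)>u_\varepsilon(t_n)$ to conclude that the upper right Dini derivative of $v-u_\varepsilon$ at $t_1$ is nonnegative, against which the strict inequality from \eqref{Hal1} and \eqref{*} still produces the contradiction. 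Neither point affects the substance: your argument is a sound, self-contained replacement for the paper's citation.
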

A statement of this theorem  can be found in \cite{driver1977}. Next we show  an integral version of Halanay inequality.

\begin{lem}\label{lem: 2.2} 
Consider 
\begin{equation}\label{2.2.1} v(t)\leq \norm{v_{t_0}}_qe^{-\sigma (t-t_0)}f(t)+\int_{t_0}^te^{-\sigma (t-s)}K\left[\sup_{u\in[s-q,s]}v(u)\right] ds,\quad t\geq t_0, \end{equation}
where $\sigma, q$ and $K$ are positive real numbers, and $f\in C(\R)$ is a positive non decreasing function.
If  $\sigma > K>0$, then there exist $\eta>0$ and $M >0$ such that
\begin{equation}
v(t)\leq  M\norm{v_{t_0}}_qf(t)e^{-\eta(t-t_0)},\quad t\geq t_0,
\end{equation}
where $\eta$ is the real solution of
\begin{eqnarray}\label{**}
\eta=-\sigma+K e^{\eta q}.
\end{eqnarray}
\end{lem}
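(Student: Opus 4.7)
The natural approach is a comparison argument. I would introduce the candidate majorant $\phi(t):=M\norm{v_{t_0}}_q f(t) e^{-\eta(t-t_0)}$ and show that, for $M>0$ chosen suitably large, $v(t)\leq \phi(t)$ for every $t\geq t_0$. On the initial interval $[t_0-q,t_0]$ one already has $v\leq\norm{v_{t_0}}_q$, and the monotonicity of $f$ together with $e^{-\eta(t-t_0)}\geq 1$ makes the bound $v\leq\phi$ cheap provided $M\geq 1/f(t_0-q)$; in addition I would require $M>1$, which will later supply the strict inequality needed for the contradiction.

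The heart of the proof is a standard first-crossing-time argument. Let $t^*>t_0$ be the smallest time, if any, at which $v(t^*)=\phi(t^*)$, so $v\leq\phi$ on $[t_0-q,t^*]$. For $s\in[t_0,t^*]$ and $u\in[s-q,s]$, combining $v\leq\phi$ with the monotonicity of $f$ and $u\geq s-q$ gives
\[
v(u)\leq M\norm{v_{t_0}}_q f(s)\, e^{\eta q}\, e^{-\eta(s-t_0)};
\]
the subcase $u\in[s-q,t_0)$ is handled by noting that it forces $s\in[t_0,t_0+q)$, on which $Mf(s)e^{\eta q}e^{-\eta(s-t_0)}\geq Mf(t_0)\geq 1$ by the choice of $M$. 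Feeding this supremum bound into \eqref{2.2.1} at $t^*$, pulling out $f(t^*)\geq f(s)$, and evaluating the elementary integral $\int_{t_0}^{t^*} e^{-\sigma(t^*-s)-\eta(s-t_0)}\,ds=(e^{-\eta T}-e^{-\sigma T})/(\sigma-\eta)$ with $T:=t^*-t_0$, one arrives at
\[
v(t^*)\leq \norm{v_{t_0}}_q f(t^*)\left[\,e^{-\sigma T}+\frac{MKe^{\eta q}}{\sigma-\eta}\bigl(e^{-\eta T}-e^{-\sigma T}\bigr)\right].
\]

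The decisive step is the algebraic cancellation triggered by the defining equation for $\eta$. The comparison closes precisely when $Ke^{\eta q}=\sigma-\eta$, that is, the Halanay-type relation $\eta=\sigma-Ke^{\eta q}$ (the form analogous to \eqref{*} under $\alpha=\sigma$, $\beta=K$), which possesses a unique positive root $\eta\in(0,\sigma)$ under the hypothesis $\sigma>K>0$. With this identity in hand the bracket collapses to $Me^{-\eta T}+(1-M)e^{-\sigma T}<Me^{-\eta T}$ since $M>1$, giving $v(t^*)<\phi(t^*)$ and contradicting the definition of $t^*$; hence no crossing occurs and $v\leq\phi$ on $[t_0,\infty)$. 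The main obstacle I anticipate is pinning down this correct defining relation for $\eta$ — the form as displayed in \eqref{**} does not produce the needed cancellation $Ke^{\eta q}/(\sigma-\eta)=1$ — together with keeping careful control of the supremum when $u$ dips into the initial interval $[t_0-q,t_0]$, which is exactly what the lower bound $M\geq 1/f(t_0-q)$ is imposed to absorb.
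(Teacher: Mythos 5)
Your proof is correct, but it takes a genuinely different route from the paper's. The paper normalizes $w(t)=v(t)/\left(\norm{v_{t_0}}_q f(t)\right)$, uses the monotonicity of $f$ to strip $f$ out of \eqref{2.2.1}, and then introduces the continuous majorant $\mu$, equal to $1$ on $[t_0-q,t_0]$ and to $e^{-\sigma(t-t_0)}+\int_{t_0}^t e^{-\sigma(t-s)}K\left[\sup_{u\in[s-q,s]}w(u)\right]ds$ for $t\geq t_0$; differentiating gives $\mu'(t)\leq -\sigma\mu(t)+K\sup_{u\in[t-q,t]}\mu(u)$, so the differential Halanay inequality (Theorem \ref{Halanay}) applies at once and yields $\mu(t)\leq e^{-\eta(t-t_0)}$, hence the conclusion with the sharper constant $M=1$. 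You instead run a self-contained first-crossing comparison against $\phi(t)=M\norm{v_{t_0}}_q f(t)e^{-\eta(t-t_0)}$, closing the estimate with the explicit integral $\int_{t_0}^{t^*}e^{-\sigma(t^*-s)-\eta(s-t_0)}ds=\left(e^{-\eta T}-e^{-\sigma T}\right)/(\sigma-\eta)$ and the cancellation $Ke^{\eta q}=\sigma-\eta$; in effect you reprove the Halanay mechanism from scratch rather than invoking Theorem \ref{Halanay}, at the price of the bookkeeping constants $M>1$, $M\geq 1/f(t_0-q)$, and with the benefit that the argument is elementary and makes the role of the root $\eta$ completely explicit. Both arguments tacitly use continuity of $v$ (you to ensure the first crossing time is attained with equality, the paper to differentiate $\mu$), so you assume nothing beyond what the paper does. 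Your side remark on \eqref{**} is also well taken: as printed, $\eta=-\sigma+Ke^{\eta q}$ is a sign error, and the root that produces the cancellation --- the one delivered by Theorem \ref{Halanay} with $\alpha=\sigma$, $\beta=K$ --- satisfies $\eta=\sigma-Ke^{\eta q}$, equivalently $\lambda=-\eta$ solves $\lambda=-\sigma+Ke^{-\lambda q}$, which is consistent with the characteristic-equation discussion following Theorem \ref{teo: 1}.
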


\begin{proof}
We define $w(t):=\frac{v(t)}{\norm{v_{t_0}}_qf(t)}$, so we have
\begin{equation*}\label{2.2.4} w(t)\leq e^{-\sigma (t-t_0)}+\int_{t_0}^te^{-\sigma (t-s)}K\frac{\left[\sup_{u\in[s-q,s]}\norm{v_{t_0}}_qf(u)\right]}{\norm{v_{t_0}}_qf(t)}\left[\sup_{u\in[s-q,s]}w(u)\right]ds. \end{equation*}
Since the function $f$ is non decreasing we have
\begin{equation*}\label{2.2.5} w(t)\leq e^{-\sigma (t-t_0)}+\int_{t_0}^te^{-\sigma (t-s)}K\left[\sup_{u\in[s-q,s]}w(u)\right] ds. \end{equation*}
Now define
$$\mu(t):=\bigg\{ \begin{array}{ll} 1& \mbox{ for } t_0-q\leq t\leq t_0\\
e^{-\sigma (t-t_0)}+\int_{t_0}^te^{-\sigma(t-s)}K \left[\sup_{u\in[s-q,s]}w(u)\right]ds& \mbox{ for } t_0\leq t.\end{array}  $$
Then $\mu$ is continuous and nonegative, and $w(t)\leq \mu(t)$ for $t_0-q\leq t$. Moreover, for $t_0\leq t$ 
$$\mu'(t)\leq-\sigma \mu (t)+K\left[\sup_{u\in[t-q,t]}\mu(u)\right].$$
Since $\sigma > K>0$, by Halanay's inequality, there exists $\eta>0$ such that
$$\mu(t)\leq e^{-\eta(t-t_0)},\quad t\geq t_0.$$
Therefore
$$w(t)\leq\mu(t)\leq e^{-\eta(t-t_0)},\quad t\geq t_0.$$
and 
$$v(t)\leq  \norm{v_{t_0}}_qf(t)e^{-\eta(t-t_0)},\quad t\geq t_0.$$
\end{proof}

\section{Discretization by piecewise constant argument and approximation over compact interval}
\label{sec: DPCA}
%We consider the homogeneous non-autonomous linear differential equations with variable delay 
%\begin{equation}\label{eq: devd}
%x'(t)=-a(t)x(t-r(t)),
%\end{equation}
%where $a:[0,\infty)\to [0,\infty)$ and $r:[0,\infty)\to [0,q],$  with initial condition 
%\begin{equation}\label{eq: icd}x(t)=\varphi(t),\quad -q\leq t\leq 0,\quad \varphi\in \Cq \equiv C([-q , 0], \R),\end{equation}
%where  $q:=\sup_{t\in\R^+_0}\{r(t)\}$ is a positive real number.
The set of DEPCA corresponding  to equation \eqref{eq: devd} is: 
\begin{equation}\label{eq: devda}z_h'(t)=a(t)z_h\left(\Big[t-\Big[r\left([t]_h\right)\Big]_h\Big]_h\right),\end{equation}
where $[\cdot]_h=[\frac{\cdot}{h}]h$ with $[\cdot]$ the usual greatest integer function.
The initial condition for differential equation \eqref{eq: devda} is
\begin{equation}\label{eq: ica}z_h(nh)=\varphi(nh),\quad n=-k,\cdots, 0, \end{equation}
where $h$ is a positive number in the interval $(0,q]$. In fact we can consider 
$h=\frac{q}{k}$ where $k\geq 1$ is an integer.
By a solution of  \eqref{eq: devda}-\eqref{eq: ica} we mean a function $z_h$ defined on 
$\{ih: i=-k,\cdots, 0\}$ by \eqref{eq: ica}, which satisfy the following properties on $\R^+$: 
\begin{enumerate}
  \item[i)] The function $z_h$ is continuous on $\R^+$, 
  \item[ii)] the derivative $z_h'(t)$ exists at each point $t\in \R^+$ with the possible exception of the points
$ih (i=0,1,2,\cdots )$ where finite one-sided derivatives exist, and
  \item[iii)] the function $z_h$ satisfies \eqref{eq: devda} on each interval $I_{(j,h)}:=[jh,(j+1)h)$
for $j=0,1,2,\cdots$.
\end{enumerate}
Note that for every positive $h$ close to zero, it is expected that solutions of \eqref{eq: devda}-\eqref{eq: ica} has similar
qualitative features to the solutions of \eqref{eq: devd}-\eqref{eq: icd},
since $[t]_h\to t$ uniformly on $\R$, as $h\to 0$. But this can be false, even for Euler's method in the setting of singular functional differential equations, see \cite{cooke2000}.\\ 
If we denote $$\gamma_h(t,r):=\Big[\frac{t}{h}-\Big[\frac{r([\frac{t}{h}]h)}{h}\Big]\Big]h,$$ then equation \eqref{eq: devda} 
can be write like
$$z_h'(t)=a(t)z_h(\gamma_h(t,r)).$$
For $ t \in I_{(i,h)}$ the function $r([\frac{t}{h}]h)$  take just one value, therefore the function $\Big[\frac{r([\frac{t}{h}]h)}{h}\Big]:=k_i$ is a fixed integer for $ t \in I_{(i,h)}$. It follows that 
\begin{eqnarray*}
\Big[\frac{t}{h}-\Big[\frac{r([\frac{t}{h}]h)}{h}\Big]\Big]& = & \Big[\frac{t}{h}-k_i\Big]\\
% & = & \Big[\frac{t-ih+ih}{h}-k_i\Big]\\
  & = & \Big[\frac{t-ih}{h}+i-k_i\Big],
\end{eqnarray*}
since $t\in I_{(i,h)}$ we have   $0\leq \frac{t-ih}{h}\leq 1$, so
\begin{eqnarray*}
\Big[\frac{t}{h}-\Big[\frac{r([\frac{t}{h}]h)}{h}\Big]\Big]& = & i-k_i.
\end{eqnarray*}
It is follows that
$$\gamma_h(t,r)=h(i-k_i),\mbox{ for }t\in I_{(i,h)}.$$
Therefore \eqref{eq: devda} is equivalent to 
\begin{equation}\label{eq: devda.a}z_h'(t)=a(t)z_h(h(i-k_i)),\quad t\in I_{(i,h)},\; i\geq 0.\end{equation}
Note that, for $ih\leq t\leq (i+1)h$,  we integrate \eqref{eq: devda.a} and obtain 
\begin{eqnarray*}
%\int_{ih}^{t}z_h'(s)ds&=&\int_{ih}^{t}a(s)z_h(h(i-k_i))ds\\
z_h(t)&=&z_h(ih)+\int_{ih}^{t}a(s)ds z_h(h(i-k_i)).
\end{eqnarray*}
Making $t\to (i+1)h^-$, from the continuity of $z_{h}$, we obtain
\begin{eqnarray*}
z_h((i+1)h)ds&=&z_h(ih)+\int_{ih}^{(i+1)h}a(s)ds z_h(h(i-k_i)).
\end{eqnarray*}
Therefore the sequence $\z_{h}(i):=z_{h}(ih)$ satisfy the linear  difference equation with variable delay
\begin{equation}\label{eq: ddvd}\z_{h}(n+1)=\z_{h}(n)+\int_{nh}^{(n+1)h}a(s)ds \z_{h}(n-k_n), 
\end{equation}
with initial conditions
\begin{equation}\label{eq: icdd} \z_{h}(n)=\varphi(nh), \quad n=0, 1,\cdots,\quad -q\leq -nh\leq 0.
\end{equation}
Note that \eqref{eq: ddvd}-\eqref{eq: icdd} is a discretization of the differential equations with variable delay \eqref{eq: devd}-\eqref{eq: icd} that 
coincides with Euler's approximation method for autonomous differential equations with variable delay.  
From the recurrence relation \eqref{eq: ddvd} and initial conditions, we have
\begin{eqnarray*}
\z_{h}(0)& = &\varphi(0)\\
\z_{h}(1)& = & \z_{h}(0)+\int_{0}^{h}a(s)ds\z_{h}(0-k_0),\\
\z_{h}(2)& = & \z_{h}(1)+\int_{h}^{2h}a(s)ds\z_{h}(1-k_1)\\
        & = & \z_{h}(0)+\int_{0}^{h}a(s)ds\z_{h}(0-k_0)+\int_{h}^{2h}a(s)ds\z_{h}(1-k_1).
\end{eqnarray*}
Therefore the sequence $\z_{h}$ solution of \eqref{eq: ddvd}-\eqref{eq: icdd} is well-defined, and satisfy
\begin{equation}\label{2.4.8}\z_{h}(n)=\varphi(0)+\sum_{i=0}^{n-1}\int_{ih}^{(i+1)h}a(s)ds \z_h(i-k_i), \quad k\geq 0.\end{equation}
From (\ref{2.4.8}), it follows that the solution of \eqref{eq: devda}-\eqref{eq: ica} for $t\geq 0$ can be written
\begin{equation}\label{2.4.10}z_{h}(t)=\z_{h}(n)+\int_{nh}^ta(s)ds \z_{h}\left(n-k_n \right),\end{equation}
or
$$z_{h}(t)=\varphi(0)+\sum_{i=0}^{n-1}\int_{ih}^{(i+1)h}a(s)ds \z_h(i-k_i)+\int_{nh}^ta(s)ds \z_{h}\left(n-k_n \right),$$
where $n=n(t)$ is such that $nh\leq t < (n+1)h$. Thus, we have proved
\begin{thm}\label{Teo2.1}The initial value problem \eqref{eq: devda}-\eqref{eq: ica} has  a unique solution in the form
\begin{equation}\label{2.4.9}
z_{h}(t)=\varphi(0)+\sum_{i=0}^{n-1}\int_{ih}^{(i+1)h}a(s)ds \z_h(i-k_i)+\int_{nh}^ta(s)ds \z_{h}\left(n-k_n \right)
\end{equation}
for $t\geq 0$ and the sequence $\z_{h}(\cdot)$ satisfies the nonlinear difference equations \eqref{eq: ddvd} 
with initial conditions \eqref{eq: icdd}.
\end{thm}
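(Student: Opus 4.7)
The plan is to construct $z_h$ on successive intervals $I_{(i,h)} = [ih,(i+1)h)$ by induction on $i \geq 0$, leveraging the reduction already carried out before the statement: on each $I_{(i,h)}$ the piecewise-constant argument collapses to the single value $\gamma_h(t,r) = h(i-k_i)$, so that equation \eqref{eq: devda} becomes
$$z_h'(t) = a(t)\, z_h\bigl(h(i-k_i)\bigr), \quad t \in I_{(i,h)},$$
an ODE with a prescribed right-hand side once the value $z_h(h(i-k_i))$ is known.

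The first task is to check that the recursion is well-posed. From $0 \leq r(t) \leq q$ and $h = q/k$ one gets $0 \leq k_i \leq k$, and hence $-k \leq i - k_i \leq i$ for every $i \geq 0$; thus $\z_h(i-k_i) := z_h(h(i-k_i))$ refers to a previously determined quantity. The base data $\z_h(-k),\dots,\z_h(0)$ are fixed by \eqref{eq: ica}. Assuming inductively that $z_h$ is continuous on $[-q,ih]$ with $\z_h(j) = z_h(jh)$ for $j \leq i$, integrating the ODE from $ih$ to $t \in I_{(i,h)}$ gives
$$z_h(t) = \z_h(i) + \left(\int_{ih}^{t} a(s)\,ds\right) \z_h(i-k_i),$$
which extends continuously to $(i+1)h$ by defining $\z_h(i+1)$ to be the left limit; this supplies the recurrence \eqref{eq: ddvd}. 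A straightforward unrolling of the recurrence then yields the closed form \eqref{2.4.9}, and properties i)--iii) are immediate: continuity follows from the matching of left limits at each $nh$, the finite one-sided derivatives at $nh$ equal $a(nh^+)\z_h(n-k_n)$ and $a(nh^-)\z_h(n-1-k_{n-1})$, and iii) is the content of the integration step. Uniqueness on each $I_{(i,h)}$ is immediate from the fundamental theorem of calculus applied to the ODE with initial value $\z_h(i)$ already fixed, and uniqueness of the whole solution follows inductively.

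There is no real obstacle here --- the entire argument is a bookkeeping formalization of the derivation preceding the statement. The only point requiring minor attention is the bound $-k \leq i - k_i \leq i$ that keeps the recursion well-defined, which hinges on the hypotheses $0 \leq r \leq q$ and $h = q/k$.
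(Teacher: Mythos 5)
Your proposal is correct and takes essentially the same route as the paper, whose proof is precisely the derivation preceding the statement: reduce \eqref{eq: devda} on each $I_{(i,h)}$ to the ODE $z_h'(t)=a(t)z_h\bigl(h(i-k_i)\bigr)$, integrate from $ih$ to $t$, use continuity at $t\to(i+1)h^-$ to obtain the recurrence \eqref{eq: ddvd}, and unroll it to get \eqref{2.4.9}. Your only addition is the explicit index bound $-k\leq i-k_i\leq i$ making the induction's well-posedness visible, which the paper leaves implicit.
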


\subsection{Aproximation over compact interval}
\label{subsec: App}
In this subsection we address the problem of approximation of solutions of  
initial value problem \eqref{eq: devd}-\eqref{eq: icd} over compact interval. 
We follows some ideas of \cite{gyori1991} to obtain.  
\begin{thm}\label{Teo2.1.1}If $r:[0,\infty)\to [0,q]$ is a continuous function then,  for any  $\varphi\in \Cq$, the solutions $x(\varphi)(t)$ and $z_{h}(\varphi)(t)$ 
of the initial value problems \eqref{eq: devd}-\eqref{eq: icd} and \eqref{eq: devda}-\eqref{eq: ica}, respectively, satisfy the following relations for all $T>0$
\begin{equation}\label{2.1.1} \lim_{h \to 0} \max_{0\leq t\leq  T}\norm{x(\varphi)(t)-z_{h}(\varphi)(t)}=0,
\end{equation}
namely
\begin{equation}\label{2.1.7}\max_{0\leq t\leq T}\norm{x(\varphi)(t)-z_{h}(\varphi)(t)}\leq \left[ e^{\int_0^Ta(s)ds }\int_0^Ta(s)ds\right] \; w_x\left(w_r(h;T)+2h;T\right).
\end{equation}
where $w_r(h;T)$  and $w_x(h;T)$ are defined by
\begin{eqnarray*}
w_r(h;T)&=&\max \left\{|r(t_2)-r(t_1)|:\; 0\leq t_1,t_2\leq t, \; |t_2-t_1|\leq h \right\},\\
w_x\left(w_r(h;t)+2h;t\right)&=&\max \left\{|x(t_2)-x(t_1)|:\; -q\leq t_1,t_2\leq t, \; |t_2-t_1|\leq 2h+w_r(h,t) \right\}.
\end{eqnarray*}
\end{thm}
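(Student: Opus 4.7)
The plan is to convert the two initial value problems to integral form and to estimate their difference via a scalar Gronwall argument. Integrating \eqref{eq: devd} from $0$ to $t$ yields $x(t) = \varphi(0) - \int_0^t a(s)\, x(s - r(s))\, ds$, while the representation in Theorem \ref{Teo2.1}, together with $\gamma_h(s,r) = (i-k_i)h$ for $s \in I_{(i,h)}$, gives an analogous integral expression for $z_{h}(t)$ with $x(s-r(s))$ replaced by $z_{h}(\gamma_h(s,r))$. Subtracting and inserting $\pm x(\gamma_h(s,r))$ inside the integrand, the triangle inequality produces
\begin{equation*}
|x(t) - z_{h}(t)| \le \int_0^t a(s) \bigl|x(s-r(s)) - x(\gamma_h(s,r))\bigr|\, ds + \int_0^t a(s) \bigl|x(\gamma_h(s,r)) - z_{h}(\gamma_h(s,r))\bigr|\, ds.
\end{equation*}

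The main technical step, and the one I expect to be the chief obstacle, is bounding the first ``consistency'' integral uniformly in $s \in [0,T]$ using the modulus $w_x$. This amounts to controlling the displacement between the two delayed arguments. For $s \in I_{(i,h)}$ one has $\gamma_h(s,r) = (i - k_i)h$ with $k_i = [r(ih)/h]$, so $0 \le r(ih) - k_i h < h$ by the definition of the floor, while $0 \le s - ih < h$ by hypothesis and $|r(s) - r(ih)| \le w_r(h; T)$ by the definition of the modulus of continuity. Together,
\begin{equation*}
|(s - r(s)) - \gamma_h(s,r)| \le |s - ih| + |r(s) - r(ih)| + |r(ih) - k_i h| \le 2h + w_r(h; T),
\end{equation*}
and therefore $|x(s-r(s)) - x(\gamma_h(s,r))| \le w_x(2h + w_r(h; T); T)$ for every $s \in [0, T]$.

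To close the estimate, I set $U(t) := \max_{-q \le u \le t} |x(u) - z_{h}(u)|$. Since $x$ and $z_{h}$ both coincide with $\varphi$ on $[-q, 0]$ and $\gamma_h(s,r) \le s$, the pointwise bound $|x(\gamma_h(s,r)) - z_{h}(\gamma_h(s,r))| \le U(s)$ holds. Combining this with the consistency bound yields the scalar integral inequality
\begin{equation*}
U(t) \le w_x(2h + w_r(h; T); T) \int_0^t a(s)\, ds + \int_0^t a(s)\, U(s)\, ds, \qquad 0 \le t \le T,
\end{equation*}
to which the classical Gronwall inequality applies, producing exactly the bound \eqref{2.1.7}. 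Finally, uniform continuity of $r$ on $[0,T]$ gives $w_r(h; T) \to 0$, and since $x$ is continuous on $[-q,T]$ (being $C^1$ on $[0,\infty)$ and equal to $\varphi \in \Cq$ on $[-q,0]$), $w_x(2h + w_r(h; T); T) \to 0$ as $h \to 0$, establishing \eqref{2.1.1}.
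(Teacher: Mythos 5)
Your proof is correct and takes essentially the same route as the paper's: the same insertion of $x(\gamma_h(s,r))$ into the error term, the same key displacement estimate $|(s-r(s))-\gamma_h(s,r)|\le 2h+w_r(h;T)$ via the triangle inequality at the grid points, and the same Gronwall--Bellman conclusion yielding \eqref{2.1.7}. The only cosmetic differences are that you work with the integrated equations directly rather than integrating the error's differential equation, and you bound the consistency integral before invoking Gronwall instead of after, neither of which changes the argument.
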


\begin{proof}
Consider the solutions $x(t)=x(\varphi)(t)$ and $z_{h}(t)=z_{h}(\varphi)(t)$ of initial value problems \eqref{eq: devd}-\eqref{eq: icd} and \eqref{eq: devda}-\eqref{eq: ica}, respectively.
 Then from \eqref{eq: devd} and \eqref{eq: devda} we find
$$x'(t)-z'_{h}(t)=-a(t)\left[x(t-r(t))-z_{h}(\gamma_{h}(t,r))\right],$$
for all $t\geq 0.$ Thus the function $\varepsilon_{h}(t)=x(t)-z_{h}(t)$ satisfies
$$\varepsilon'_{h}(t)=-a(t)\varepsilon_{h}(\gamma_{h}(t,r))-a(t)\left[x(t-r(t))-x(\gamma_{h}(t,r))\right] ,$$
for all $t\geq 0$ with  $\varepsilon_{h}(0)=0$. We integrate over $[0,t]$ and obtain
\begin{eqnarray*}
|\varepsilon_{h}(t) |&\leq & \int_0^ta(s)|\varepsilon_{h}(\gamma_{h}(s,r))|ds+\int_0^ta(s)\left|x(s-r(s))-x(\gamma_{h}(s,r))\right|ds\\
 &\leq & \int_0^ta(s)|\varepsilon_{h}(s,r(s))|ds+f_h(t),
\end{eqnarray*}
where
%\begin{equation}\label{def: fh} 
%f_h(t):=\int_0^ta(s)\left|x(s-r(s))-x(\gamma_{h}(s,r))\right|ds,\quad t\geq 0.
%\end{equation}
$$f_h(t):=\int_0^ta(s)\left|x(s-r(s))-x(\gamma_{h}(s,r))\right|ds,\quad t\geq 0.$$
On the other hand, 
$$\gamma_{h}(s,r)\leq s\mbox{ for all }s\geq 0,$$
and from  initial conditions we have that
$$|\varepsilon_{h}(\gamma_{h}(s,r))|=|\varphi(\gamma_{h}(s,r))-x_{h}(\gamma_{h}(s,r))|=0,$$
for all $s\geq 0$ such that $\gamma_{h}(s-r)\leq 0.$
Therefore we find that the function $\xi(t)=\max_{0\leq s\leq t} |\varepsilon_{h}(s)|$ satisfies the inequality
$$\xi(t)\leq \int_0^ta(s)\xi(\gamma_{h}(s,r))ds+f_h(t)\leq \int_0^ta(s)\xi(s))ds+f_h(t), \quad t\geq 0,$$
where we used that the integral term and $f_h(t)$ are monotone increasing functions. 
By Gronwall-Bellman inequality we find 
\begin{equation}\label{2.1.2}
\xi(t)\leq f_h(t) e^{\int_0^ta(s)ds},\quad t\in [0,T].
\end{equation}
Now, we note that $|t-r(t)-\gamma_h(t,r)|=|t-r(t)-(i-k_i)h|$ where $i=\left[\frac{t}{h}\right]$ and $k_i=\Big[\frac{r([\frac{t}{h}]h)}{h}\Big]$, so
\begin{eqnarray}
|t-r(t)-\gamma_h(t,r)|& \leq &|t-ih|+|r(t)-k_ih| \nonumber \\
        & = &  \left|t-\left[\frac{t}{h}\right]h \right|+\left|r(t)-\Big[\frac{r([\frac{t}{h}]h)}{h}\Big]h\right| \nonumber \\
        & \leq & h+\left|r(t)-r([\frac{t}{h}]h)\right|+\left|r([\frac{t}{h}]h)-\Big[\frac{r([\frac{t}{h}]h)}{h}\Big]h\right| \nonumber \\
        & \leq & h+\left|r(t)-r([\frac{t}{h}]h)\right|+h \nonumber \\
        & \leq & 2h+w_r(h;t),\label{eq: estinterval}
\end{eqnarray}
where  $w_r(h;t)=\max \left\{|r(t_2)-r(t_1)|:\; 0\leq t_1,t_2\leq t, \; |t_2-t_1|\leq h \right\}.$
Note that for uniformly continuous function $r$, $w_r(h,t)$ tends to zero as $h$ tends to $0$.
Set
$$ w_x\left(w_r(h;t)+2h;t\right)=\max \left\{|x(t_2)-x(t_1)|:\; -q\leq t_1,t_2\leq t, \; |t_2-t_1|\leq 2h+w_r(h,t) \right\}.$$
Then from (\ref{eq: estinterval}) it is follows that
$$ |x(s-r(s)) -x(\gamma_{h}(s,r))|\leq w_x\left(w_r(h;t)+2h;t\right),$$
for all $0\leq s\leq t$ and for all $r.$ Also,
$$f_h(t)\leq \int_0^ta(s)ds\; w_x(w_r(h;t)+2h;t), \quad t\geq 0,$$
and clearly (\ref{2.1.2}) yields
\begin{equation}\label{2.1.3}
\xi(t)\leq e^{\int_0^ta(s)ds }\int_0^ta(s)ds\; w_x\left(w_r(h;T)+2h;T\right),\quad t\in [0,T].
\end{equation}
Since  \eqref{2.1.3} and $|\varepsilon_{h}(t)|=|x(t)-z_{h}(t)|=|x(\varphi)(t)-z_{h}(\varphi)(t)|\leq \xi(t)$ we obtain \eqref{2.1.7} for all $h=\frac{q}{k}>0$ and $t\in [0,T]$.
%$$|x(\varphi)(t)-z_{h}(\varphi)(t)|\leq e^{\int_0^ta(s)ds }\int_0^ta(s)ds$$
%\begin{equation}\label{2.1.6}|x(\varphi)(t)-z_{h}(\varphi)(t)|\leq e^{\int_0^ta(s)ds }\int_0^ta(s)ds
%\end{equation}
So, for all $T>0$
\begin{equation*}\max_{0\leq t\leq T}|x(\varphi)(t)-z_{h}(\varphi)(t)|\leq \left[ e^{\int_0^Ta(s)ds }\int_0^Ta(s)ds \right] \; w_x\left(w_r(h;T)+2h;T\right)\to 0,
\end{equation*}
as $h\to 0,$ from the uniform continuity of the functions $x$ and $r$ on $[0,T]$.
\end{proof}

%\begin{cor}\label{Cor2.1.1} For any  $\varphi\in \Cq$ the solution $x(\varphi)(t)$ and $\z_{h}(\varphi)(t)$ 
%of the initial value problems \eqref{eq: devd}-\eqref{eq: icd} and  \eqref{eq: ddvd}-\eqref{eq: icdd}, respectively, satisfy the following relations for all $T>0$
%\begin{equation}\label{2.1.1a} \lim_{h \to 0} \max_{0\leq t\leq  T}\norm{x(\varphi)(t)-\z_{h}(\varphi)(t)}=0,
%\end{equation}
%namely
%\begin{equation}\label{2.1.7a}\max_{0\leq t\leq T}\norm{x(\varphi)(t)-\z_{h}(\varphi)(t)}\leq %e^{\int_0^Ta(s)ds }\int_0^Ta(s)ds \; w(x; T,h).
%\end{equation}
%\end{cor}

%\begin{proof}
%Consider the solutions $x(t)=x(\varphi)(t)$ and $z_{h}(t)=z_{h}(\varphi)(t)$ of initial value %problems \eqref{eq: devd}-\eqref{eq: icd} and \eqref{eq: ddvd}-\eqref{eq: icdd}, respectively.
% Then from \eqref{eq: devd} and \eqref{eq: devda} we find
%$$ |x(t)-\z_{h}(t)|\leq|x(t)-z_{h}(t)|+|z_{h}(t)-\z_{h}(t)|$$
%\end{proof}

\section{Transference of asymptotic stability}\label{sec: Tsta}
In this section we obtain a sufficient conditions for : a) uniform approximation of solutions over an unbounded interval and b) transference of uniform asymptotic stability  of the zero solution of non-autonomous linear differential equations with variable delay  of \eqref{eq: devd} to the  zero solution of the corresponding discrete difference equations  \eqref{eq: ddvd}.
\begin{enumerate}
 \item[(A1)] The zero solution of \eqref{eq: devd} is uniformly asymptotically stable,
 \item[(A2)] the function $a(t)$ is bounded, namely 
\begin{equation}\label{eq: aa}
a_0=\sup_{t\geq 0}|a(t)|<\infty.
\end{equation} 
  \item[(A3)] The function $r(t)$ is uniformly continuous on $[0,\infty).$
\end{enumerate}
Next we will obtain an estimate for the distance between the solutions 
of initial value problems \eqref{eq: devd}-\eqref{eq: icd} and \eqref{eq: devda}-\eqref{eq: ica} on $[0,\infty).$
\begin{thm}\label{teo: 1}
If the assumptions (A1), (A2) and (A3) holds. Then for $h$ small enough, and for every $\varphi \in \Cq$ the solutions $x(\varphi)(t)$ and $z_h(\varphi)(t)$ 
of the linear differential equations with delay \eqref{eq: devd} and \eqref{eq: devda}, respectively, satisfy
$$|x(\varphi)(t)-z_h(\varphi)(t)|\leq \left[K\norm{E_{h_{t_0}}}_q+K_1(h)M_1\norm{\varphi}_q t \right]e^{-\eta(t-t_0) },\quad t\geq t_0,$$
where $\eta>0,\; t_0=3q+w_{r}(q)$  and $K_1(h)=a_0^2\left[2h+w_{r}(h)\right]K,$ with $a_0$ defined in \eqref{eq: aa} and
$$w_r(\ell)=\max \left\{|r(t_2)-r(t_1)|:\; 0\leq t_1,t_2; \; |t_2-t_1|\leq \ell \right\},$$
and $K_1(h)\to 0$ as $h\to 0$.
\end{thm}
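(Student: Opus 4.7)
The strategy is to split the analysis at time $t_0 = 3q + w_r(q)$: on the compact window $[0,t_0]$ apply Theorem~\ref{Teo2.1.1} to obtain $\|E_{h_{t_0}}\|_q$, and on $[t_0,\infty)$ propagate the error along the uniformly asymptotically stable flow by means of the integral Halanay inequality (Lemma~\ref{lem: 2.2}), feeding in the fact that the forcing generated by the discretization decays because $x$ itself decays.

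First, I would recover from the derivation of Theorem~\ref{Teo2.1.1} the error identity $\varepsilon_h'(t) = -a(t)\varepsilon_h(\gamma_h(t,r)) - a(t)[x(t-r(t)) - x(\gamma_h(t,r))]$ with $\varepsilon_h := x(\varphi) - z_h(\varphi)$, and then estimate the inhomogeneous part. Since (A1) is uniform asymptotic stability of a linear non-autonomous equation, it is equivalent to uniform exponential stability, so $|x(\varphi)(t)| \leq K\|\varphi\|_q e^{-\mu(t-t_0)}$ for some $K,\mu>0$. Combined with (A2) and the differential equation, this yields $|x'(\tau)| \leq a_0 K\|\varphi\|_q e^{-\mu(\tau-2q)}$ for $\tau \geq t_0 - q$. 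The mean value theorem together with the bound $|t-r(t) - \gamma_h(t,r)| \leq 2h + w_r(h)$ from~(\ref{eq: estinterval}) then produces
$$|a(s)[x(s-r(s)) - x(\gamma_h(s,r))]| \leq a_0^2(2h + w_r(h))\,K\|\varphi\|_q\, e^{-\mu(s-2q)},$$
which is exactly the source of the coefficient $K_1(h) = a_0^2(2h + w_r(h))K$ and of the factor $\|\varphi\|_q$ in the statement; the prefactor tends to $0$ as $h\to 0$ by (A3).

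Integrating the error equation from $t_0$ to $t$ then gives a Volterra-type inequality
$$|\varepsilon_h(t)| \leq \|E_{h_{t_0}}\|_q + \int_{t_0}^t a(s)\sup_{u\in[s-q,s]}|\varepsilon_h(u)|\,ds + C K_1(h)\|\varphi\|_q\int_{t_0}^t e^{-\mu(s-2q)}\,ds,$$
which I would recast, using the exponential estimate of the fundamental solution provided by (A1), into the exponential-kernel form required by Lemma~\ref{lem: 2.2}. In that lemma, choosing the forcing function $f$ so that it absorbs both the constant contribution from $\|E_{h_{t_0}}\|_q$ and the linearly-in-$t$ contribution from integrating the exponentially decaying forcing at a rate slightly exceeding $\eta$, I obtain the decay $e^{-\eta(t-t_0)}$ multiplied by $K\|E_{h_{t_0}}\|_q + K_1(h)M_1\|\varphi\|_q t$, matching the claimed bound.

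The main obstacle is the conversion step: the Volterra inequality has the variable coefficient $a(s)$ inside the integral, whereas Lemma~\ref{lem: 2.2} requires the kernel $e^{-\sigma(t-s)}$. Bridging this requires factoring out the fundamental solution of the linear homogeneous equation, whose exponential estimate is exactly the content of (A1), and then choosing $\sigma$ slightly below $\mu$ while preserving $\sigma > K_1(h)$; this last inequality forces $h$ to be small enough, which is where the hypothesis ``for $h$ small enough'' is consumed. The specific threshold $t_0 = 3q + w_r(q)$ is calibrated so that, for $s \geq t_0$, the shifted argument $\gamma_h(s,r)$ lies within $[s-q-(2h+w_r(h)),\,s]$, ensuring the supremum on $[s-q,s]$ in Lemma~\ref{lem: 2.2} genuinely controls $|\varepsilon_h(\gamma_h(s,r))|$.
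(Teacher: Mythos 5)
Your overall architecture matches the paper's (split at $t_0=3q+w_r(q)$, exponential estimates from (A1), then the integral Halanay inequality of Lemma~\ref{lem: 2.2} under $\sigma>K_1(h)$ for $h$ small), but there is a genuine gap at the decisive step. You start from the decomposition $\varepsilon_h'(t)=-a(t)\varepsilon_h(\gamma_h(t,r))-a(t)\left[x(t-r(t))-x(\gamma_h(t,r))\right]$, whose linear part is the \emph{piecewise-constant-argument} operator, not the operator of \eqref{eq: devd}; hypothesis (A1) provides an exponential bound only for the fundamental solution $U(t;s,\cdot)$ of \eqref{eq: devd}, while the stability of the discretized dynamics is part of the conclusion, not the hypothesis. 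Consequently your Volterra inequality carries the coefficient $a(s)$ (of size $a_0$) in front of $\sup_{u\in[s-q,s]}|\varepsilon_h(u)|$, and no ``factoring out the fundamental solution'' turns this into the exponential-kernel form of Lemma~\ref{lem: 2.2} with a coefficient that is small in $h$: at best you would need $\sigma>a_0K$, which is not implied by (A1)--(A3) and does not improve as $h\to 0$; Gronwall applied directly to your inequality gives growth of order $e^{\int a}$, not decay. Note that your $K_1(h)$ arises only in the \emph{forcing} (the $x$-difference term, via the mean value theorem), whereas in the theorem $K_1(h)$ must also multiply the error-feedback integral, since it is the condition $\sigma>K_1(h)$ on the \emph{feedback} that determines $\eta$ and consumes ``$h$ small enough.''

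The missing idea is the paper's add-and-subtract of $a(t)z_h(t-r(t))$ (equivalently, of $a(t)\varepsilon_h(t-r(t))$ in your notation): write $E_h'(t)=-a(t)E_h(t-r(t))-a(t)\left[z_h(t-r(t))-z_h(\gamma_h(t,r))\right]$, apply the fundamental theorem of calculus to the bracket and substitute equation \eqref{eq: devda} for $z_h'$, so that the argument deviation $|t-r(t)-\gamma_h(t,r)|\le 2h+w_r(h)$ from \eqref{eq: estinterval} enters as the \emph{length of an integration interval} rather than as a mere argument shift. This produces the feedback term $a(t)\int_{\gamma_h(t,r)}^{t-r(t)}a(\xi)E_h(\gamma_h(\xi,r))\,d\xi$, of size $a_0^2\left[2h+w_r(h)\right]\sup|E_h|$, plus the forcing $g_h$; the variation-of-constants formula relative to \eqref{eq: devd} then yields precisely $|E_h(t)|\le e^{-\sigma(t-t_0)}\left[K\norm{E_{h_{t_0}}}_q+K_1(h)M_1\norm{\varphi}_qe^{\sigma t_0}t\right]+K_1(h)\int_{t_0}^{t}e^{-\sigma(t-s)}\sup_{s-t_0\le\varsigma\le s}|E_h(\varsigma)|\,ds$, to which Lemma~\ref{lem: 2.2} applies. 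Two smaller slips: the supremum window must be $[s-t_0,s]$, since $\gamma_h(s,r)$ can lie below $s-q$ --- your own bound $[s-q-(2h+w_r(h)),s]$ already shows $[s-q,s]$ is insufficient --- so the lemma is invoked with delay $t_0$ rather than $q$; and $t_0$ is calibrated chiefly so that $\gamma_h(\xi,r)\ge 0$ on the range of integration in $g_h$, allowing the decay estimate for $x$, valid for $t\ge 0$, to be used there. Your treatment of the forcing via the bound on $x'$ is sound and parallels the paper's estimate of $g_h$; once the feedback term is repaired as above, your plan becomes essentially the paper's proof.
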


\begin{proof} 

Consider the solutions $x(t)=x(\varphi)(t)$ and $z_h(t)=z_h(\varphi)(t)$ of differential equations with delay \eqref{eq: devd} and \eqref{eq: devda}, respectively.
We define the error function $E_h(\cdot):=x(\cdot)-z_h(\cdot)$,  it follows that
$$E_h'(t)=-a(t)\left[x(t-r(t))-z_h(\gamma_h(t,r)) \right],$$
for all $t\geq 0.$ Adding and substracting $a(t)z(t-r(t))$ we obtain
$$E_h'(t)=-a(t)E_h(t-r(t))-a(t)\left[z_h(t-r(t))-z_h(\gamma_h(t,r))\right],$$
%or
%$$x'(t)-z'_h(t)=-a(t)\left[x(t-r(t))-z_h(t-r(t)) \right]+a(t)\left[z_h(t-r(t))-z_h(\gamma_h(t,r))\right]-a(t)\left[z_h(t-r(t))-z_h(\gamma_h(t,r))\right],$$
and, by fundamental theorem of calculus, we have
$$E_h'(t)=-a(t)E_h(t-r(t))-a(t)\int_{\gamma_h(t,r)}^{t-r(t)}z'_h(\xi)d\xi.$$
Now, from \eqref{eq: devda}, we obtain  
$$E_h'(t)=-a(t)E_h(t-r(t))-a(t)\int_{\gamma_h(t,r)}^{t-r(t)}a(\xi)z_h(\gamma_h(\xi,r))d\xi,$$
it is follows that
\begin{equation}\label{eq: errdevd}
E_h'(t)=-a(t)E_h(t-r(t))+a(t)\int_{\gamma_h(t,r)}^{t-r(t)}a(\xi) E_h(\gamma_h(\xi,r))d\xi+g_h(t),\nonumber
\end{equation}
where
\begin{equation}\label{eq: gdevd}
g_h(t):=-a(t)\int_{\gamma_h(t,r)}^{t-r(t)}a(\xi)x(\gamma_h(\xi,r))d\xi.
\end{equation}
Since variation-of-constants formula, \citep[see][pp. 334]{driver1977}, we have
$$E_h(t)=U(t;\tau,E_{h_{\tau}})+\int_{\tau}^tU\left(t;s,\left[a(s)\left(\int_{\gamma_h(s,r)}^{s-r(s)}a(\xi)E_h(\gamma_h(\xi,r))d\xi\right) +g_h(s)\right]u\right)ds,$$
where $U(t;\tau,E_{h_{\tau}})$ is the unique solution of 
Equation \eqref{eq: devd} with initial value $E_{h_{\tau}}$ at $\tau$,
and $u$ is the unit step function $u:[-q,0]\to\R$ defined by
\begin{equation}\label{eq: usf}
u(t)=\bigg\{\begin{array}{ll}0,\mbox{ for }-q\leq t<0,\\1,\mbox{ for } t=0.
\end{array}
\end{equation}

%evolution family  of the linear differential equation with variable delay  \eqref{eq: devd}
Thus $|E_h(t)|$ for all $t\geq t_0$ satisfies
\begin{equation}\label{eq: esti}
|E_h(t)|\leq |U(t;t_0,E_{h_{t_0}})|+\int_{t_0}^t\left|U\left(t;s,\left[a(s)\left(\int_{\gamma_h(s,r)}^{s-r(s)}a(\xi)E_h(\gamma_h(\xi,r))d\xi\right) +g_h(s)\right]u\right)\right|ds.
\end{equation}

%From \eqref{eq: aa} we obtain
%\begin{equation}\label{eq: esti}
%|E_h(t)|\leq|U(t,t_0)E_h(t_0)|+ a_0^2\int_{t_0}^t|U(t,s)|\left|\int_{\gamma_h(s,r)}^{s-r(s)}E_h(\gamma_h(\xi,r))d\xi\right|ds+ a_0\int_{t_0}^t|U(t,s)||g_h(s)|ds.
%\end{equation}
Since we assume that zero solution of \eqref{eq: devd} is uniformly asymptotically stable, there are 
constants $\sigma>0$ and $K>0$, \citep[see][pp. 185]{hale1993}, such that for each $\phi\in \Cq$ we have
\begin{equation}\label{eq: es}
|U(t;s,\phi)|\leq K\norm{\phi}_qe^{-\sigma(t-s)}, \quad t\geq s.
\end{equation}
Moreover, there exists a constant $M_0$ such that
\begin{equation}\label{eq: esx}
|x(t)|\leq M_0\norm{\varphi}_qe^{-\sigma t}, \quad t\geq 0.
\end{equation}
In order to use \eqref{eq: esx} to estimate $g_h(t)$ we need find a positive real number
$t_0$ such that for $t\geq t_0$ then 
$$0\leq \gamma_h(\xi,t),\mbox{ whenever }\xi\geq \gamma_h(t,r).$$
We recall \eqref{eq: estinterval}, i.e., $|t-r(t)-\gamma_h(t,r)|=|\gamma_h(t,r)-t+r(t)|\leq 2h+w_{r}(h)$, it is follows
$$t-r(t)-w_{r}(h)-2h\leq \gamma_h(t,r)\leq t-r(t)+w_{r}(h)+2h.$$
Since $h\in (0,q]$ and $r$ is uniformly continuous on $[0,\infty)$ it  follows 
\begin{equation}\label{eq: est 24}
t-3q-w_r(q)\leq t-r(t)-w_r(q)-2q\leq t-r(t)-w_r(h)-2h\leq \gamma_h(t,r).
\end{equation}
Now we use \eqref{eq: esx} and \eqref{eq: aa} in \eqref{eq: gdevd} to estimate $g_h(t)$ for $t\geq t_0:=3q+w_r(q)$ and obtain:
\begin{eqnarray}
|g_h(t)|& \leq &a_0^2\int_{\gamma_h(t,r)}^{t-r(t)}|x(\gamma_h(\xi,r))|d\xi\nonumber \\
        & \leq &a_0^2\int_{\gamma_h(t,r)}^{t-r(t)}M_0\norm{\varphi}_qe^{-\sigma \gamma_h(\xi,r))}d\xi\nonumber \\
        & = &a_0^2\int_{\gamma_h(t,r)}^{t-r(t)}M_0\norm{\varphi}_qe^{-\sigma\xi} e^{\sigma[\xi- \gamma_h(\xi,r))]}d\xi\nonumber,
\end{eqnarray}
since \eqref{eq: estinterval} and uniform continuity of $r$ we have
\begin{eqnarray}
|g_h(t)|& \leq &a_0^2\int_{\gamma_h(t,r)}^{t-r(t)}M_0\norm{\varphi}_qe^{-\sigma\xi} e^{\sigma[w_{r}(h)+2h]}d\xi\nonumber \\
                & \leq &e^{-\sigma \gamma_h(t,r))}a_0^2\int_{\gamma_h(t,r)}^{t-r(t)}M_0\norm{\varphi}_qe^{\sigma[w_{r}(h)+2h]}d\xi. \label{eq: estgh}
\end{eqnarray}
Next, using \eqref{eq: estinterval},  we estimate $s-\gamma(s,r)$
\begin{eqnarray}
s-\gamma(s,r)& = & s-r(s)-\gamma(s,r) +r(s) \nonumber \\
                & \leq & q+2h+w_r(h)\nonumber\\
                & \leq & 3q+w_r(q).\label{eq: estima}
\end{eqnarray}
Since $t\geq t_0 := 3q+w_r(q)$, \eqref{eq: estinterval}, \eqref{eq: est 24} and \eqref{eq: estima} inequality \eqref{eq: estgh} become into
\begin{eqnarray}
|g_h(t)|& \leq & e^{-\sigma t}  a_0^2M_1\norm{\varphi}_q\left[2h+w_r(h)\right], \label{eq: estghb}
\end{eqnarray}
where $$M_1:=M_0 e^{\sigma[5q+2w_r(q)]}.$$%\geq M_0 e^{\sigma[3q+2w_r(q)+2h]}
Using estimations \eqref{eq: es} and \eqref{eq: estghb} for $t_0:=3q+w_r(q)$, in \eqref{eq: esti}  we obtain for $t\geq t_0$
\begin{eqnarray*}
|E_h(t)|&\leq &|U(t;t_0,E_{h_{t_0}})|+\int_{t_0}^t\left|U\left(t;s,\left[a(s)\left(\int_{\gamma_h(s,r)}^{s-r(s)}a(\xi)E_h(\gamma_h(\xi,r))d\xi\right) +g_h(s)\right]u\right)\right|ds\\
&\leq & K\norm{E_{h_{t_0}}}_qe^{-\sigma(t-t_0)} +\int_{t_0}^tK\norm{\left[a(s)\left(\int_{\gamma_h(s,r)}^{s-r(s)}a(\xi)E_h(\gamma_h(\xi,r))d\xi\right) +g_h(s)\right]u}_qe^{-\sigma(t-s)}ds\\
&\leq & K\norm{E_{h_{t_0}}}_qe^{-\sigma(t-t_0)} +\int_{t_0}^tK \left(\left| a(s)\left(\int_{\gamma_h(s,r)}^{s-r(s)}a(\xi)E_h(\gamma_h(\xi,r))d\xi\right) \right| +|g_h(s)| \right)e^{-\sigma(t-s)}ds\\
&\leq & K\norm{E_{h_{t_0}}}_qe^{-\sigma(t-t_0)} + a_0^2\int_{t_0}^tK \left|\int_{\gamma_h(s,r)}^{s-r(s)}E_h(\gamma_h(\xi,r))d\xi\right|e^{-\sigma(t-s)}ds\\
& & +\int_{t_0}^tK e^{-\sigma s}  a_0^2M_1\norm{\varphi}_q\left[2h+w_r(h)\right]e^{-\sigma(t-s)}ds\\
&\leq & K\norm{E_{h_{t_0}}}_qe^{-\sigma(t-t_0)} +a_0^2\int_{t_0}^tK\left( \sup_{\gamma_h(s,r)\leq \varsigma\leq s-r(s)}|E_h(\varsigma)|\right)\int_{\gamma_h(s,r)}^{s-r(s)}d\xi e^{-\sigma(t-s)}ds\\
& & +a_0^2KM_1\norm{\varphi}_q\left[2h+w_r(h)\right]e^{-\sigma t}t\\
&= & K\norm{E_{h_{t_0}}}_qe^{-\sigma(t-t_0)} +a_0^2\int_{t_0}^tK \left(\sup_{\gamma_h(s,r)\leq \varsigma\leq s-r(s)}|E_h(\varsigma)|\right)\left(s-r(s)-\gamma_h(s,r)\right)e^{-\sigma(t-s)}ds\\
& & +a_0^2KM_1\norm{\varphi}_q\left[2h+w_r(h)\right]e^{-\sigma t}t\\
&\leq & K\norm{E_{h_{t_0}}}_qe^{-\sigma(t-t_0)} +a_0^2\left[2h+w_r(h)\right]\int_{t_0}^tK \left(\sup_{s-3q-w_r(h)\leq \varsigma\leq s}|E_h(\varsigma)|\right) e^{-\sigma(t-s)}ds\\
& & +a_0^2KM_1\norm{\varphi}_q\left[2h+w_r(h)\right]e^{-\sigma t}t\\
&= & e^{-\sigma(t-t_0)}\left[ K\norm{E_{h_{t_0}}}_q +K_1(h)M_1\norm{\varphi}_qe^{\sigma t_0}t\right]+  K_1(h)\int_{t_0}^t  e^{-\sigma(t-s)}\sup_{s-t_0\leq \varsigma\leq s}|E_h(\varsigma)| ds,
\end{eqnarray*}
where $K_1(h):=a_0^2\left[2h+w(r;h)\right]K.$ If $h$ is small enough such that:
 $$\sigma>K_1(h),$$
then by Halanay type inequality (Lemma \ref{lem: 2.2}) there exists $\eta>0$  such that
\begin{equation}\label{eq: 2.2.3}
|E_h(t)|\leq \left[ K\norm{E_{h_{t_0}}}_q +K_1(h)M_1\norm{\varphi}_qe^{\sigma t_0}t\right]e^{-\eta(t-t_0) },\quad t\geq t_0,
\end{equation}
where $\eta$ is the positive solution of $$\eta=\sigma-K_1(h)e^{\eta q}.$$
\end{proof}
Thus we have proved the fundamental theorem of this chapter. There are similar 
results  to our results,
however the novelty of our theorem lies in the considered delayed differential 
equation and the technique used in the proof.
In \cite{cooke1994} and \cite{gyori2002} used Gronwall-Bellman 
inequality to obtain an estimating exponential decay. 
Using Gronwall-Bellman inequality we shall obtain: 
$$|E_h(t)|\leq \left[ K\norm{E_{h_{t_0}}}_q +K_1(h)M_1\norm{\varphi}_qe^{\sigma t_0}t\right]e^{-\sigma_0(t-t_0) },\quad t\geq t_0,$$
where $\sigma_0=\sigma -K_1(h)e^{\sigma q}.$ Therefore we need $h$ small enough such that
$$\sigma >K_1(h)e^{\sigma q}=a_0^2\left[2h+w(r;h)\right]K e^{\sigma q}.$$

On the other hand, the necessary condition to use Halanay type inequality is: $h$ small enough such that
$$\sigma >K_1(h)=a_0^2\left[2h+w(r;h)\right]K,$$
then 
$$|E_h(t)|\leq \left[ K\norm{E_{h_{t_0}}}_q +K_1(h)M_1\norm{\varphi}_qe^{\sigma t_0}t\right]e^{-\eta(t-t_0) },\quad t\geq t_0,$$
where $\eta$ is the positive real  solution of 
\begin{equation}
\eta=\sigma-K_1(h)e^{\eta q}.\nonumber
\end{equation}
We note that the size of $h$ is independent of the delay size $q$, and the number $-\eta$ is the unique real solution of the characteristic equation 
\begin{equation}\label{eq: chareq} 
\lambda=-\sigma +K_1(h)e^{-\lambda q},
\end{equation}
corresponding to the differential equations with delay
$$y'(t)=-\sigma y(t)+K_1(h)y(t-q).$$
In fact $-\eta$ is the eigenvalue of the characteristic equation \eqref{eq: chareq} with the greatest real part.  

\medskip

Now we can prove that the solutions of \eqref{eq: ddvd}-\eqref{eq: icdd} approximate uniformly the solutions of \eqref{eq: devd}-\eqref{eq: icd}, and also that
zero solution of \eqref{eq: ddvd}-\eqref{eq: icdd} is uniformly asymptotically stable.

\begin{cor}\label{cor: 2}
Under the conditions of the Theorem \ref{teo: 1}, we have that:
\begin{enumerate}
\item $|x(t)-z_{h}\left([t]_h\right)|  \to 0 \mbox{ as }h\to 0,\mbox{ for }t>0;  $
\item the zero solution  of the difference equations with delay
 \eqref{eq: ddvd}-\eqref{eq: icdd} is uniform asymptotically stable.
\end{enumerate}
\end{cor}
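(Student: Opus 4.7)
My plan is to derive both conclusions from Theorem \ref{teo: 1}, combined with hypothesis (A1) and the compact-interval approximation statement of Theorem \ref{Teo2.1.1}, using the identity $z_h(nh) = \z_h(n)$ to translate bounds on the interpolant $z_h$ into bounds on the sequence $\z_h$.

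For part (1), fix $t>0$ and split by the triangle inequality
\begin{equation*}
|x(t) - z_h([t]_h)| \leq |x(t) - z_h(t)| + |z_h(t) - z_h([t]_h)|.
\end{equation*}
For $h$ small enough that $t \geq t_0 := 3q+w_r(q)$, Theorem \ref{teo: 1} bounds the first summand by $[K\|E_{h_{t_0}}\|_q + K_1(h) M_1 \|\varphi\|_q\,t]\,e^{-\eta(t-t_0)}$. With $t$ fixed we have $K_1(h)\to 0$ by definition, and $\|E_{h_{t_0}}\|_q \to 0$ by Theorem \ref{Teo2.1.1} applied on the compact interval $[0,t_0]$, so the first summand goes to $0$. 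For the second summand, formula \eqref{2.4.10} yields
\begin{equation*}
z_h(t) - z_h([t]_h) = \int_{[t]_h}^{t} a(s)\, ds \cdot \z_h\bigl([t/h] - k_{[t/h]}\bigr),
\end{equation*}
which is bounded by $a_0 h$ times $|\z_h(\cdot)|$; by the uniform boundedness obtained in part (2) below, this also tends to $0$ as $h\to 0$. If $h$ is so small that $t<t_0$ still holds, the conclusion reduces directly to Theorem \ref{Teo2.1.1}.

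For part (2), I write $z_h(\varphi)(t) = x(\varphi)(t) - E_h(t)$ and bound each piece by a constant multiple of $\|\varphi\|_q$ times an exponentially decaying function of $t$. Assumption (A1) gives $|x(\varphi)(t)| \leq M_0\|\varphi\|_q e^{-\sigma t}$. To apply Theorem \ref{teo: 1} I need $\|E_{h_{t_0}}\|_q \leq C\|\varphi\|_q$ with $C$ independent of $\varphi$ and of small $h$; this follows by iterating the recurrence \eqref{eq: ddvd}, whose one-step amplification is at most $1+a_0 h$, so $|\z_h(n)| \leq e^{a_0 t_0}\|\varphi\|_q$ for $nh\le t_0$, and combining with the analogous bound on $|x|$ over the same compact interval. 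Substituting into Theorem \ref{teo: 1} produces, for constants independent of $\varphi$,
\begin{equation*}
|z_h(\varphi)(t)| \leq \bigl(M_0 + KC + K_1(h)M_1 e^{\sigma t_0}\,t\bigr)\|\varphi\|_q\, e^{-\min(\sigma,\eta)(t-t_0)}.
\end{equation*}
Absorbing the linear factor $t$ into a slightly smaller exponent $\eta' \in (0,\min(\sigma,\eta))$ gives $|z_h(\varphi)(t)| \leq C'\|\varphi\|_q e^{-\eta' t}$, and evaluating at $t = nh$ yields uniform asymptotic stability of the zero solution of \eqref{eq: ddvd}--\eqref{eq: icdd}.

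The main technical obstacle I foresee is handling the polynomial factor $t$ that multiplies $K_1(h)$ inside the bound of Theorem \ref{teo: 1}: for part (2) this factor must be traded against a slightly smaller exponential rate $\eta'$, and I must verify that the resulting rate stays positive and uniform over small $h$ and over all $\varphi$. A secondary point is ensuring $\|E_{h_{t_0}}\|_q = O(\|\varphi\|_q)$ uniformly in $h$, which I address via the crude one-step amplification bound $1+a_0 h$ applied finitely many times on $[0,t_0]$.
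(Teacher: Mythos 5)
Your proposal is correct, but it takes a different route from the paper at both key points, and in one place it is actually more careful than the paper. For part (1), the paper splits at $x([t]_h)$ rather than at $z_h(t)$: it writes $|x(t)-z_{h}([t]_h)|\le |x(t)-x([t]_h)|+|x([t]_h)-z_{h}([t]_h)|$, so the first term needs only continuity of $x$ (which is free) and the second is exactly the grid-point estimate \eqref{eq: 2.2.3}; the linear factor $t$ is then neutralized not by shrinking the exponent but by choosing $h$ so small that $K_1(h)<\frac{\eps}{3M_1\norm{\varphi}_q}\frac{\eta e}{e^{\eta t_0}}$, i.e.\ via the elementary bound $\sup_{t\ge 0} t e^{-\eta t}=1/(\eta e)$. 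Your split at $z_h(t)$ instead forces you to control $|z_h(t)-z_h([t]_h)|$ through formula \eqref{2.4.10}, which requires the a priori uniform bound $|\z_h(n)|\le C\norm{\varphi}_q$ as an extra input; the paper's decomposition avoids this, though your crude $(1+a_0h)^n$ iteration supplies it cheaply, and since your part (2) never uses part (1) the apparent forward reference is not circular. For part (2) you are genuinely more quantitative than the paper: the paper merely observes that $|x(nh)-\z_h(n)|\to 0$ at an exponential rate, invokes (A1) to get $x(nh)\to 0$, and concludes $\z_h(n)\to 0$ for every $\varphi$, asserting uniform asymptotic stability rather tersely; your route --- writing $z_h=x-E_h$, establishing $\norm{E_{h_{t_0}}}_q=O(\norm{\varphi}_q)$ uniformly in small $h$, and trading the factor $t$ for a reduced rate $\eta'<\min(\sigma,\eta)$ --- yields the explicit estimate $|\z_h(n)|\le C'\norm{\varphi}_q e^{-\eta' nh}$ with constants independent of $\varphi$, which is what the word ``uniform'' actually demands, so your argument buys rigor where the paper's is loose, at the cost of the extra bookkeeping above.

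Two small repairs. First, the loose end you flag about uniformity of the rate closes in one line: $\eta(h)$ solves $\eta=\sigma-K_1(h)e^{\eta q}$ and $K_1$ is nondecreasing in $h$, so $\eta(h)\ge\eta(h^*)>0$ for all $h\le h^*$ once $\sigma>K_1(h^*)$. Second, note that $t_0=3q+w_r(q)$ does not depend on $h$, so your phrase ``for $h$ small enough that $t\ge t_0$'' is a misstatement; the case split is a dichotomy on $t$ alone ($t\ge t_0$ handled by Theorem \ref{teo: 1}, $t<t_0$ by Theorem \ref{Teo2.1.1}), exactly as the paper does it --- harmless here, since you cover both cases anyway.
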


\begin{proof}
In section \ref{sec: DPCA} we have shown that \eqref{eq: ddvd}-\eqref{eq: icdd}  correspond to a discrete version of differential equation
\eqref{eq: devd}-\eqref{eq: icd}. We recall that $z_{h}(h n )=\z_{h}(n)$ for $n$ any positive integer.
If $t<t_0$ we use Theorem \ref{Teo2.1.1}. If $t\geq t_0$  then
\begin{eqnarray*}
|x(t)-z_{h}\left([t]_h\right)|&\leq &|x(t)-x\left([t]_h\right)|+|x\left([t]_h\right)-z_{h}\left([t]_h\right)|.
\end{eqnarray*}
Then, from inequality \eqref{eq: 2.2.3}, we have 
\begin{eqnarray*}
|x(t)-z_{h}\left([t]_h\right)|&\leq &|x(t)-x\left([t]_h\right)|+\left[K\norm{E_{h_{t_0}}}_q+K_1(h)M_1\norm{\varphi}_q [t]_h\right]e^{-\eta([t]_h-t_0) }.
\end{eqnarray*}
Next, for $\eps>0$ there are positive constants $h_1, h_2$ and $h_3$ such that:\\
If $h<h_1$ then  $|x(t)-x\left([t]_h\right)|<\frac{\eps}{3}$, since continuity of $x$. If $h<h_2$ then  $\norm{E_{h_{t_0}}}_q<\frac{\eps}{3K}$, from Theorem \ref{Teo2.1.1}. If $h<h_3$ then  $K_1(h)<\frac{\eps}{3M_1\norm{\varphi}_q}\frac{\eta e}{e^{\eta t_0}}$. Therefore, for $h<\min\{h_1,h_2,h_3\}$ it is follows
\begin{eqnarray*}
|x(t)-z_{h}\left([t]_h\right)|&\leq &\frac{\eps}{3}+\frac{\eps}{3}+\frac{\eps}{3}=\eps.
\end{eqnarray*}
We have shown part 1. Since inequality \eqref{eq: 2.2.3} we have also
\begin{eqnarray*}
|x(nh)-\z_{h}(n)| & \le & \left[K\norm{E_{h_{t_0}}}_q+K_1(h)M_1\norm{\varphi}_q nh \right]e^{-\eta(nh-t_0) } \to 0\mbox{ as }n\to \infty. 
 \end{eqnarray*}
Since the zero solution of \eqref{eq: devd}-\eqref{eq: icd} is uniformly asymptotically stable, and 
$\norm{x(nh)-\z_{h}(n)}$ decay rate exponentially to zero, it is follows that  
$\z_{h}(n)$ tends to zero, for every initial conditions $\varphi$. 
Therefore the  zero solution of \eqref{eq: ddvd}-\eqref{eq: icdd} is uniformly asymptotically stable too.
\end{proof}

Thus we have shown that under the hypotheses of Theorem \ref{teo: 1} the numerical approximations
of equation \eqref{eq: ddvd} are good for all $ T>0 $, independent of the size of $ T $. 
Moreover the corresponding discrete 
difference equation is  uniformly asymptotically stable also. We use piecewise constant argument, 
theory of functional differential equations and Halanay-type inequality  in the proof.
Furthermore, our result is independent of delay size, this was possible 
thanks to our use of inequality Halanay. Thus we extend and improve the results of \cite{cooke1994}.

\section{Applications and examples}\label{Aae}

In our result we assume that the zero solution of \eqref{eq: devd} is uniformly asymptotically stable,
the problem of find necessary condition for uniform stability of non-autonomous differential equations 
with variable delay called the attention of several authors because its difficulty.
% For example,if there exists $\alpha>0$ such that $a(t)\leq \alpha$ and $\alpha q\leq 3/2$, then the zero solution of \eqref{eq: devd} is uniformly stable, but if $\alpha q >3/2$ then there are equations with unbounded solutions (see \cite{gopalsamy2013,yoneyama1987}).
Next we recall some stability criteria for equation \eqref{eq: devd} and apply our result to obtain stability criteria for difference equation.  

A classic result of stability for functional differential equations can be found in \cite{yorke1970}.
A consequence of Yorke's theorem is:
\begin{Athm}[Yorke]\label{teo: yorke}
If the function $a(\cdot)$ satisfy 
$$0< a(t)\leq \alpha, \quad t\geq 0;$$
for a positive constant $\alpha$ such that
$$ 0 <\alpha q<\frac{3}{2}. $$
Then the zero solution of \eqref{eq: devd} is uniformly asymptotically stable.
\end{Athm}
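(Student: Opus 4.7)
The plan is to recognize that this statement is an immediate specialization of Yorke's original uniform asymptotic stability theorem for scalar retarded functional differential equations $x'(t)=f(t,x_t)$, and to reduce the proof to verifying Yorke's hypothesis for the particular vector field $f(t,\phi)=-a(t)\phi(-r(t))$ coming from equation \eqref{eq: devd}. Recall that Yorke's criterion reads: if there exists $\alpha>0$ with $\alpha q<3/2$ such that
$$-\alpha M(\phi)\leq f(t,\phi)\leq \alpha M(-\phi),\quad \text{for all }t\geq 0,\ \phi\in\Cq,$$
where $M(\phi):=\max\{0,\sup_{-q\leq\theta\leq 0}\phi(\theta)\}$, then the zero solution is uniformly asymptotically stable. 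So the whole argument is just: write \eqref{eq: devd} in the form $x'(t)=f(t,x_t)$, check the two-sided bound, and invoke the theorem.

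First I would set $f(t,\phi):=-a(t)\phi(-r(t))$; this is well defined on $\R^+\times\Cq$ because $-r(t)\in[-q,0]$ by assumption. Then $x'(t)=f(t,x_t)$ is exactly equation \eqref{eq: devd}. Next I would verify the Yorke inequality by splitting on the sign of $\phi(-r(t))$. If $\phi(-r(t))\geq 0$, then $f(t,\phi)=-a(t)\phi(-r(t))\leq 0\leq \alpha M(-\phi)$ (since $M$ is nonnegative), while using $0<a(t)\leq\alpha$ and $\phi(-r(t))\leq M(\phi)$ yields $f(t,\phi)\geq -\alpha\phi(-r(t))\geq -\alpha M(\phi)$. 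If instead $\phi(-r(t))<0$, then $f(t,\phi)>0\geq -\alpha M(\phi)$, and $f(t,\phi)=a(t)(-\phi(-r(t)))\leq \alpha\sup_{-q\leq\theta\leq 0}(-\phi(\theta))\leq \alpha M(-\phi)$. Thus Yorke's condition holds with the given $\alpha$.

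Having verified the hypothesis, and knowing by assumption that $\alpha q<3/2$, I would quote Yorke's theorem to conclude that the zero solution of \eqref{eq: devd} is uniformly asymptotically stable. There is no real obstacle here beyond carefully unfolding the definition of $M$ and checking the two sign cases; the delicate quantitative ingredient, namely the sharp constant $3/2$, is entirely internal to Yorke's theorem and is used as a black box. The only point worth emphasizing in the write-up is that the time-dependence of $a(t)$ and the variable nature of the delay $r(t)$ play no role in the verification of Yorke's condition, since the bound $0<a(t)\leq\alpha$ is uniform in $t$ and $-r(t)\in[-q,0]$ for all $t$; this is precisely what makes Yorke's theorem a natural tool for the non-autonomous variable-delay setting considered in this paper.
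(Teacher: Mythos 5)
Your route is, in outline, exactly the paper's: the paper offers no proof of Theorem A at all, it simply quotes the statement as ``a consequence of Yorke's theorem'' with a citation to \cite{yorke1970}, and your contribution is to make the implicit reduction explicit --- write \eqref{eq: devd} as $x'(t)=f(t,x_t)$ with $f(t,\phi)=-a(t)\phi(-r(t))$ and verify the Yorke condition $-\alpha M(\phi)\leq f(t,\phi)\leq \alpha M(-\phi)$ by the two sign cases. That verification is correct as far as it goes: both cases are handled properly, and you are right that the time-dependence of $a$ and the variability of $r$ are harmless since $0<a(t)\leq\alpha$ uniformly and $-r(t)\in[-q,0]$.

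There is, however, a genuine gap, and it sits inside the black box you invoke. The criterion as you quote it --- two-sided $M$-bound plus $\alpha q<\frac{3}{2}$ implies \emph{uniform asymptotic} stability --- is not a theorem: $f\equiv 0$ satisfies $-\alpha M(\phi)\leq 0\leq \alpha M(-\phi)$ for every $\alpha>0$, yet the zero solution of $x'=0$ is stable but not attractive. What the hypotheses you check actually deliver in Yorke's Theorem is uniform stability; the asymptotic-stability conclusions in \cite{yorke1970} carry an additional nondegeneracy hypothesis. Nor is this a pedantic point for the specific equation \eqref{eq: devd}: take $a(t)=\eps e^{-t}$ with $\eps>0$ small and $r\equiv q$, so that $0<a(t)\leq\eps$ and $\eps q<\frac{3}{2}$. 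Solutions are bounded, $\int_0^\infty a(s)\,ds=\eps<\infty$, and every solution converges to $\varphi(0)-\int_0^\infty a(s)x(s-r(s))\,ds$, which is strictly positive for constant positive initial data once $\eps$ is small; hence the zero solution is not asymptotically stable even though all the stated hypotheses of Theorem A hold. To close the gap one must add a condition preventing $a$ from being integrable at infinity --- for instance $a(t)\geq\delta>0$, or a uniform nonintegrability condition of the type $\liminf_{t\to\infty}\int_t^{t+T}a(s)\,ds>0$ --- and then invoke the corresponding clause of Yorke's theorem. In short: your sign-case computation is fine and matches the paper's intent, but the step ``quote Yorke to get uniform asymptotic stability from the bound alone'' fails; this inaccuracy is inherited from, rather than added to, the paper's own unproved statement of Theorem A.
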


\begin{exam}\label{ej: 2.1}
We consider the non-autonomous linear differential equations with delay 
\begin{equation}\label{eq: ex2.1}
x'(t)=-\left[1+\frac{\sin(t)}{3}\right]x\left(t-|\cos(t)|\right).
\end{equation}
Since $0 <1+\frac{\sin(t)}{3}<\frac{4}{3}$  and $0\leq |\cos(t)|\leq 1$, it follows that $\alpha q=\frac{4}{3}<\frac{3}{2}$, therefore from Theorem A the zero solution of \eqref{eq: ex2.1} is uniformly asymptotically stable so (A1) and (A2) holds. Since the function $\cos(x)$ is uniformly continuous on $\R$ (A3) holds.
So Theorem \ref{teo: 1} and Corollary \ref{cor: 2} are valid. Therefore we can approximate the solution of \eqref{eq: ex2.1} by the family of difference equations \eqref{eq: ddvd}  corresponding to \eqref{eq: ex2.1}
\begin{equation}\label{eq: discrete}
 \z_h(n+1)=\z_h(n)-\mathfrak{a}_h(n)\z_h(n-k_n),\nonumber
\end{equation}
where 
$$\mathfrak{a}_h(n)=\int_{nh}^{(n+1)h}\left[1+\frac{\sin(s)}{3}\right]ds=h-\frac{\cos((n+1)h)-\cos(nh)}{3},$$
and $$k_n=\Big[\frac{|\cos(nh)|}{h}\Big].$$
It follows that the zero solution of
\begin{equation}\label{eq: discrete}
 \z_h(n+1)=\z_h(n)-\left\{h-\frac{\cos((n+1)h)-\cos(nh)}{3}\right\}\z_h\left(n-\Big[\frac{|\cos(nh)|}{h}\Big]\right),
\end{equation}
is uniformly asymptotically stable.
We note that, since mean value theorem, \eqref{eq: discrete} is equivalent to
\begin{eqnarray*}
\z_h(n+1)-\z_h(n)&=&-\left\{h-\frac{\cos((n+1)h)-\cos(nh)}{3}\right\}\z_h\left(n-\Big[\frac{|\cos(nh)|}{h}\Big]\right)\\
&=&-h\left\{1-\frac{1}{3}\frac{\cos((n+1)h)-\cos(nh)}{h}\right\}\z_h\left(n-\Big[\frac{|\cos(nh)|}{h}\Big]\right)\\
&=&-h\left\{1+\frac{\sin(c_{n+1})}{3}\right\}\z_h\left(n-\Big[\frac{|\cos(nh)|}{h}\Big]\right),
\end{eqnarray*} 
for some $c_{n+1}\in (nh,nh+h)$.
\begin{eqnarray}
\frac{\z_h(n+1)-\z_h(n)}{h}&=&-\left[1+\frac{\sin(c_{n+1})}{3}\right]\z_h\left(n-\Big[\frac{|\cos(nh)|}{h}\Big]\right).
\end{eqnarray}
\end{exam}

\begin{figure}[hbtp]
\caption{Approximation solution of \eqref{ej: 2.1}
with initial function $\phi\equiv 5$  with $h=0.5$}
\centering
\includegraphics[scale=.6]{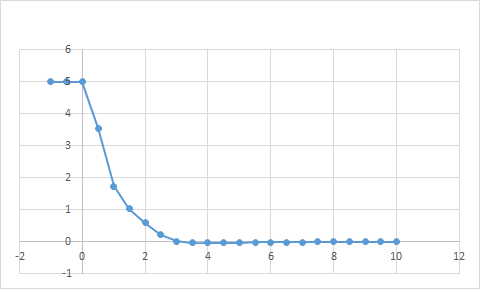}
\end{figure}
\begin{figure}[hbtp]
\caption{Approximation solution of \eqref{ej: 2.1}
with initial function $\phi\equiv 5$  with  $h=0.\overline{3}$}
\centering
\includegraphics[scale=.6]{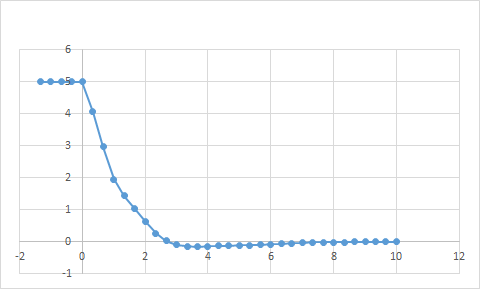}
\end{figure}
\begin{figure}[hbtp]
\caption{Approximation solution of \eqref{ej: 2.1}
with initial function $\phi\equiv 5$  with  $h=0.25$}
\centering
\includegraphics[scale=.6]{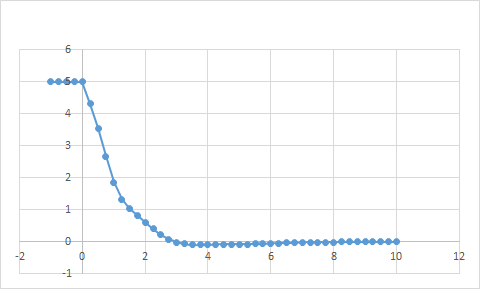}
\end{figure}

\bigskip 

\section[Conclusion]{Conclusion}

In order to get our overall goal we need identify the main assumptions, techniques and methods
used in papers about approximation and transference of stability properties between 
solutions of differential equations with delay and the corresponding difference equations.

About the techniques and methods used, we note that the results of \cite{cooke1994} and \cite{gyori2002} rest
on the functional differential  equations that error $E_h(t)$ satisfy. Actually $E_h(t)$ satisfy a linear 
non-homogeneous or semi-linear  functional differential equations. If the linear 
homogeneous  differential equations with delay is uniformly asymptotically stable, then is possible conclude 
the exponential decay rate of the error by using an integral inequality. In the previous work they consider 
Gronwall-Bellman inequality, however we use Halanay inequality because is more appropriate for delay 
differential equations. We also note that \cite{mohamad2000} and \cite{liz2002} used Halanay inequality as key 
technique to prove that the zero solution of both continuous functional differential equations and discrete
equations are exponentially stable. However, this technique does not state any estimation of the error. 

Our Theorem \ref{teo: 1} state the approximation and transference of exponential stability
properties of solutions of non-autonomous differential equations with variable delay and retarded functional 
differential equation with feedback, respectively,  to the corresponding difference equation 
by using piecewise constant argument, the techniques and methods above mentioned.

Natural applications of our results can be found in the systems of differential equations used to model cellular 
neural networks and identification of  parameters in functional differential equations see, for instance, \cite{mohamad2003,abbas2013} and \cite{hartungturi1997,hartung1998,hartung2000} respectively.

\section*{Acknowledgements}
I thank Prof. Dr. Manuel Pinto for initiate me into mathematical research and suggest me this subject. I also appreciate the support of CIR 1418 research project at the Universidad Central de Chile.
\bibliography{referencias}

\bibliographystyle{apalike}

\end{document}